\pgfplotsset{compat=1.17}
\def\={\;=\;}  \def\+{\,+\,}
\theoremstyle{plain}
\newtheorem{theorem}{Theorem}
\newtheorem*{corollary*}{Corollary}
\newtheorem*{Example*}{Example}
\newtheorem{conjecture}{Conjecture}
\newtheorem*{conjectureold}{Conjecture}
\theoremstyle{definition}
\newtheorem*{def*}{Definition}
\newtheorem*{theorem*}{Theorem}
\newtheorem*{example}{Example}
\newtheorem*{problem}{Problem}
\newtheorem*{definition*}{Definition}
\theoremstyle{remark}
\newtheorem*{remark}{Remark}
\newcommand{\Z}{\mathbb{Z}}
\title[Do perfect powers repel partition numbers?]{Do Perfect powers repel partition numbers?}
\author{Mircea Merca, Ken Ono and Wei-Lun Tsai}
\dedicatory{In memory of H\"aim Brezis and his contributions to mathematics}
\address{Department of Mathematical Models and Methods, Fundamental Sciences Applied in Engineering Research Center, National University of Science and Technology Politehnica Bucharest, Bucharest, 060042, Romania}
\email{mircea.merca@upb.ro}
\address{Department of Mathematics, University of Virginia, Charlottesville, VA 22904, USA}
 \email{ken.ono691@virginia.edu}
 \address{Department of Mathematics, University of South Carolina, Columbia, SC 29208, USA}
 \email{weilun@mailbox.sc.edu}
\keywords{Partition function, perfect powers}
\subjclass[2020]{Primary 11P82,05A17; Secondary 05A20}
\begin{document}
\thanks{The second
  author thanks  the NSF
(DMS-2002265 and DMS-2055118) for their support, and the third author acknowledges the support of an AMS-Simons Travel Grant. }
 
\begin{abstract}In 2013 Zhi-Wei Sun conjectured that $p(n)$ is never a power of an integer when $n>1.$ We confirm this claim in many cases.  We also observe that integral powers appear to repel the partition numbers. If $k>1$ and $\Delta_k(n)$ is the distance between $p(n)$ and the nearest $k$th power, then for every $d\geq 0$ we conjecture that there are at most finitely many $n$ for which $\Delta_k(n)\leq d.$ More precisely, for every $\varepsilon>0,$ we conjecture that
 $$M_k(d):=\max\{n \ : \ \Delta_k(n)\leq d\}=o( d^{\varepsilon}).
 $$
In $k$-power aspect with $d$ fixed, we also conjecture that if $k$ is sufficiently large, then
$$
M_k(d)=\max \left\{ n \ : \ p(n)-1\leq d\right\}.
$$
In other words,  $1$ generally  appears to be the closest $k$th power among the partition numbers.
\end{abstract}

\maketitle

\section*{Introduction}
A {\it partition} of a non-negative integer $n$ is any non-increasing sequence of positive integers that sum to $n$,  and the {\it partition function} $p(n)$ denotes their number. The long history of $p(n)$ (for example, see \cite{Andrews}) is marked with celebrated contributions by mathematicians such as Euler, Hardy, and Ramanujan. Indeed, we have Euler's
 famous recurrence relation, which asserts for positive $n$ that
$$
p(n)=\sum_{k\in \Z\setminus \{0\}} (-1)^{k+1} p\left(n-\frac{3k^2+k}{2}\right)=p(n-1)+p(n-2)-p(n-5)-p(n-7)+\dots.
$$
Hardy and Ramanujan famously proved the asymptotic formula
$$
p(n)\sim \frac{1}{4n\sqrt{3}}\cdot e^{\pi\sqrt{2n/3}},
$$
in work which gave birth to the ``circle method'', a tool which is now ubiquitous in analytic number theory.
Their approach was later perfected by Rademacher, who obtained an exact formula as an infinite convergent series
$$
p(n)= 2 \pi (24n-1)^{-\frac{3}{4}} \sum_{k =1}^{\infty} \frac{A_k(n)}{k}\cdot
I_{\frac{3}{2}}\left( \frac{\pi \sqrt{24n-1}}{6k}\right).
$$
Here $I_{\frac{3}{2}}(\cdot)$ is a  modified Bessel function of the first kind, and $A_k(n)$ is a Kloosterman-type sum.
Extending beyond the world asymptotics, exact formulas, and recurrence relations,  Ramanujan famously proved that
\begin{displaymath}
\begin{split}
p(5n+4)&\equiv 0\pmod 5,\\
p(7n+5)&\equiv 0\pmod 7,\\
p(11n+6)&\equiv 0\pmod{11},
\end{split}
\end{displaymath}
representing the first cases of three infinite families of congruences modulo arbitrary powers of 5, 7, and 11. These congruences have inspired an entire field of study in combinatorics and the theory of modular forms.

\section*{Sun's Conjecture}

Despite its long history, the partition function continues to be a source of beautiful and fascinating problems. Here we refine one such problem, the following 2013 conjecture of Zhi-Wei Sun (see Conjecture 8.9 (iii) of \cite{Sun1} and also  \cite{Sun2, Sun3}).\footnote{In the case of  squares, T. Amdeberhan independently made the same conjecture/speculation in 2017 \cite{Amdeberhan}.}

\begin{conjectureold} [Sun] If $n>1$, then $p(n)$ is not a $k$th power of an integer for any $k>1.$
\end{conjectureold}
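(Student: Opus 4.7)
The plan is to combine computational verification, $\ell$-adic valuation arguments coming from the Ramanujan congruences, and $k$th-power residue obstructions modulo auxiliary primes. Because no single tool appears adequate for every $n$, the realistic goal is to confirm the conjecture on a large subset of integers, not on every $n>1$ --- consistent with the abstract's promise to handle ``many cases''.

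First, I would verify directly that $p(n)\ne m^k$ for all $2\le n\le N_0$ and all $2\le k\le \log_2 p(n)$, using Euler's pentagonal recurrence. The range of relevant $k$ is small because $p(n)$ grows exponentially, and this computation handles small $n$ and provides base cases for later reductions. For each fixed $k\ge 2$, the infinite part of the proof then reduces to ruling out $p(n)=m^k$ in progressions $n\equiv r\pmod M$.

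Second, the main arithmetic engine would be the observation that if $\ord_\ell p(n)=1$ for some prime $\ell$, then $p(n)$ cannot be a $k$th power for any $k\ge 2$. Ramanujan's congruence $p(5n+4)\equiv 0\pmod 5$, together with its Watson--Atkin refinements modulo $5^j$, gives the lower bound $\ord_5 p(n)\ge 1$ on the progression $n\equiv 4\pmod 5$; restricting to $n\equiv 4\pmod 5$ with $n\not\equiv 24\pmod{25}$ and combining with the exact-valuation side of the Watson identities pins $\ord_5 p(n)$ to exactly $1$ for a positive-density set of $n$. The same strategy applies to $\ell=7$ and $\ell=11$ via the other two Ramanujan congruences and their higher-power extensions, handling several further progressions.

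Third, to attack residue classes missed by the valuation method, I would apply $k$th-power residue restrictions modulo auxiliary primes $\ell$. Since the nonzero $k$th powers in $\F_\ell^\times$ form a subgroup of index $\gcd(k,\ell-1)$, any congruence of the form $p(n)\equiv a\pmod \ell$ with $a$ a $k$th-power non-residue immediately excludes $p(n)=m^k$ in that progression. The Ahlgren--Ono framework of partition congruences modulo small primes supplies many such identities; combining them by the Chinese remainder theorem should cover an overwhelming proportion of residues modulo any chosen modulus, particularly for small $k$ such as $k=2$ and $k=3$.

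The main obstacle is the residue classes left uncovered by every such filter. For any finite union of congruence conditions there will remain an arithmetic progression modulo some $M$ on which neither the $\ell$-adic valuation method nor the $k$th-power residue method yields any obstruction, and on that progression an accidental equality $p(n)=m^k$ cannot be ruled out by present techniques. Going further appears to demand genuinely new input, for example an effective lower bound on $|p(n)-m^k|$ of Baker type applied to the Hardy--Ramanujan--Rademacher expansion, or a sharper exploitation of the modularity of $\sum p(n)q^n = q^{1/24}/\eta(\tau)$. For this reason I expect the approach above to secure Sun's conjecture on a set of $n$ of positive (conjecturally full) density, while the full conjecture for all $n>1$ remains beyond the plan's reach.
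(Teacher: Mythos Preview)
The statement is a conjecture that the paper does not prove; what the paper establishes is the partial Theorem~1, valid only for $n$ in the set $S=\{n:\ p(n)=x^2+\ell^a,\ 2\le\ell<100\ \text{prime},\ a\ge1,\ \ell\nmid x\}$. The paper's route is entirely different from yours: it invokes the Bennett--Siksek classification of integer solutions to $x^2+q^{\alpha}=y^k$ with $q<100$ prime and $k\ge3$, and then checks by direct computation that none of the finitely many exceptional $y^k$ on their list is a partition number. This is uniform in $k\ge3$ for every $n\in S$, says nothing about $k=2$, and uses no partition congruences at all. Your plan instead slices by arithmetic progressions in $n$ via Ramanujan-type congruences and $k$th-power residue sieving, so the two approaches would cover rather different (and possibly complementary) sets of $n$.

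That said, two steps in your outline are genuine gaps rather than mere details. First, the claim that the Watson--Atkin identities ``pin $\ord_5 p(n)$ to exactly $1$'' on $n\equiv4\pmod5$, $n\not\equiv24\pmod{25}$ is not what those identities give: they supply lower bounds $\ord_5 p(n)\ge j$ on the appropriate progressions, not exact valuations, so you would need to supply or cite a separate exact-valuation statement here before this case is secured. Second, and more seriously, the Ahlgren--Ono congruences you invoke in your third step are all of the shape $p(An+B)\equiv 0\pmod\ell$; since $0$ is a $k$th power in $\F_\ell$ for every $k$, such congruences never yield a $k$th-power non-residue obstruction. To run a residue sieve you would need congruences of the form $p(An+B)\equiv a\pmod\ell$ with $a$ a fixed \emph{nonzero} non-residue, and results of that type are far scarcer and not supplied by the Ahlgren--Ono framework. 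Without a replacement for this step, the sieve portion of your plan collapses, leaving only the valuation argument and direct computation.
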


Although we are unable to prove this conjecture, we show that recent deep work of Bennett and Siksek \cite{BS} on Diophantine equations offers strong evidence supporting its truth. Their work precludes many partition numbers from being perfect powers. To make this precise, we define the following special set of positive integers
$$
S:=\left\{ n \ : \ p(n)=x^2 + \ell^a,\ {\text {where}}\ x\in \Z, \ 2\leq \ell <100\ {\text{prime,}}\  a\geq 1, {\text {and}} \ \ell \nmid x\right\}.
$$

\begin{theorem} For integers $n\in S,$ the following are true.

\noindent
(1) We have that $p(n)$ is not a $k$th power for any $k\geq 3.$

\noindent
(2) Sun's Conjecture for squares implies the conjecture for all $k$th powers, where $k>1$.
\end{theorem}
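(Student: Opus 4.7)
The strategy is to combine the defining property of $S$ with recent Bennett--Siksek classifications of integer solutions to Lebesgue--Nagell type Diophantine equations. For $n \in S$ the membership condition furnishes a representation
$$
p(n) \= x^2 \+ \ell^a
$$
with $2 \leq \ell < 100$ prime, $a \geq 1$, and $\ell \nmid x$.

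For part (1), I would suppose for contradiction that $p(n) \= y^k$ with $k \geq 3$. Substitution yields
$$
x^2 \+ \ell^a \= y^k,
$$
a Lebesgue--Nagell type equation in which the coprimality $\ell \nmid x$ is already built into the hypothesis. I would first reduce to a prime exponent by replacing $y$ with $y^{k/q}$ for a prime divisor $q$ of $k$. If $k$ admits an odd prime divisor $q \geq 3$, the Bennett--Siksek classification of integer solutions to $X^2 + \ell^a = Y^q$ (with $\ell$ prime, $\ell < 100$, $\ell \nmid X$, and $q$ an odd prime) produces an explicit finite list of exceptional solutions, against which each potential value $Y^q$ is checked to not coincide with any partition number. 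If instead $k$ is a power of $2$, then $k \geq 4$ and $p(n)$ is a fourth power, so the factorisation
$$
\ell^a \= (y^{k/2} \m x)(y^{k/2} \+ x)
$$
combined with $\ell \nmid x$ (and a short parity argument handling $\ell = 2$) forces the two factors to be coprime. Hence one of them must equal $1$, reducing the problem to the Ramanujan--Nagell type equation $2 y^{k/2} - 1 = \ell^a$, whose finitely many solutions are likewise tabulated in the Bennett--Siksek work and individually excluded.

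Part (2) is then immediate from part (1): for $n \in S$ with $n > 1$, any relation $p(n) \= y^k$ with $k > 1$ forces $k \= 2$, since $k \geq 3$ is precluded by (1); but then $p(n)$ is a perfect square, contradicting Sun's conjecture for squares at $n$.

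The principal obstacle lies in the invocation of the Bennett--Siksek classification: their standing hypothesis $\ell \nmid x$ is precisely what motivates the definition of $S$, so the set-theoretic match is clean, but the true depth of their theorem resides in the modular method, specifically in the Frey--Hellegouarch curves attached to putative solutions and the level lowering of the associated mod-$q$ Galois representations. The secondary, concrete task is the finite verification that none of the sporadic solutions in their explicit lists equals a value $p(n)$; this is a direct computation, but it is essential to close the argument.
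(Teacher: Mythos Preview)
Your overall strategy --- invoke the Bennett--Siksek classification for $x^2+\ell^a=y^k$ and then verify that none of the finitely many sporadic values $y^k$ is a partition number --- is exactly what the paper does. The paper, however, applies the Bennett--Siksek theorem \emph{directly} for every exponent $k\ge 3$: their result (Theorem~1 of \cite{BS}) already covers all integers $k\ge 3$, not merely odd primes, so no reduction to a prime exponent is needed.

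Your detour through prime exponents is where a genuine gap appears. In the power-of-$2$ branch you factor $\ell^a=(y^{k/2}-x)(y^{k/2}+x)$ and arrive at the equation $2y^{k/2}-1=\ell^a$; you then assert that its solutions are ``likewise tabulated in the Bennett--Siksek work.'' They are not: Bennett--Siksek classify solutions to $X^2+\ell^a=Y^k$, not to $2W^m-1=\ell^a$. (Moreover, for $\ell=2$ the two factors are \emph{not} coprime --- they share a factor of $2$ when $x$ is odd --- so the ``short parity argument'' you allude to leads instead to $y^{k/2}=2^{a-2}+1$, which requires Mih\u{a}ilescu's theorem, again not Bennett--Siksek.) The repair is immediate: since $k$ is a power of $2$ with $k\ge 4$, simply note that $p(n)=(y^{k/4})^4$ and apply Bennett--Siksek with exponent $4$; or better, drop the case split entirely and quote their theorem for the original $k\ge 3$, as the paper does.
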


\begin{proof}[Proof of Theorem~1]  Bennett and Siksek consider Diophantine equations of the form
$$
x^2+q^{\alpha}=y^k.
$$
They prove (see Theorem~1 of \cite{BS}) 
 that if $x, y, q, \alpha$ and $k$ are solutions in the positive integers with
$2\leq q<100$ prime, $q\nmid x$, $k\geq 3$, then $(q, \alpha, y, k)$ is a member of an explicit finite list
$$
\left \{ (2,1,3,3), (2,2,5,3), (2,5,3,4),\dots, (89,1,5,3), (97,2,12545, 3), (97,1,7,4)\right\}.
$$
Applied to our setting, we replace $q$ by $\ell$, and note that if $n\in S$, then we have the identity
$$
    p(n)=x^2+\ell^a.
$$
Therefore, if $p(n)=y^k$ is a $k$th power, then apart from the finitely many possible candidates given by this explicit finite list, it must be that $k\in \{1, 2\}.$ By brute force, we checked that none of the numbers $y^k$ from this list are actual partition numbers. This is an easy finite calculation, as $p(n)$ is an increasing sequence in $n.$ Therefore, both claims follow.
\end{proof}

\begin{example} Using Theorem~1 (1), we show that 
 $p(3), p(4), \dots, p(15)=176$ are not $k$th powers for any $k\geq 3$. The point is that this conclusion is made without factorizing these explicit values.
To this end, we list the relevant numbers of the form $x^2+\ell^a$ that are used to define $S$. Ordered by size, they are
\begin{displaymath}
\begin{split}
&\left \{3, 4, 5, \dots, 36\right \} \cup \left \{ 38,  39, 40, \dots, 63\right \} \cup \left \{65, 66, 67,\dots, 120\right\} \cup
\left \{122, 123, 124, \dots, 135\right \} 
 \cup \left\{137, 138\right\}\\
 &\hskip1.75in\ \ \ \ \  \cup \left \{140, 141, \dots, 155\right \} \cup
\left \{157, 158, \dots, 164\right\} \cup \left\{167, 168,\dots, 183\right\}  \cup \dots.
\end{split}
\end{displaymath}
The positive integers not exceeding $p(15)=176$ that are missed are: 1, 2, 37, 64, 121, 136, 139, 156, 165, 166.
 Since the partition numbers
 $p(3)=3, p(4)=5, \dots, p(15)=176$ are not among these ten values, Theorem~1 (1) implies that none are $k$th powers for any $k\geq 3.$ 
 
 \end{example}

\begin{remark} Theorem~1 does not apply to all $n$. Indeed, among the first 20 partition numbers, it does not apply for the three values $p(2)=2, p(16)=231$, and $p(19)=490.$ There are no positive integer solutions to
$$
x^2+\ell^a \in  \left\{2, 231, 490\right\},
$$
where $\ell\leq 100$ is prime and $\ell\nmid x.$

\end{remark}

\section*{Further Conjectures: Do $k$th powers repel partition numbers?}

To refine Sun's Conjecture, it makes sense to search for ``near misses'', partition numbers that are close to $k$th powers.
A brief scan of a table of values of $p(n)$ does not reveal many near misses.  In fact, the partition numbers appear to be repelled by the $k$th powers. 
Indeed, the perfect squares up to 250 are:
$$
1, \ 4, \ 9, \ 16, \ 25, \ 36, \ 49, \ 64, \ 81, \  {\bf100}, \ 121, \ 144, \ 169, \ 196, \ 225, \ \dots
$$
while the partition numbers in this range are:
$$
1,\ 1, \ 2, \ 3, \ 5, \ 7, \ 11, \ 15, \ 22, \ 30, \ 42, \ 56, \ 77, \  {\bf101}, \ 135, \ 176, \ 231,\dots.
$$
Apart from the perfect square 1, the only notable near miss is the number $p(13)=101$.

To quantify this observation,
we define $\Delta_k(n)\geq 0$ to be the distance between $p(n)$ and its nearest $k$th power. Namely, we let
\begin{equation}
\Delta_k(n):=\min\left\{ |p(n)-m^k| \ : \ m\in \Z\right\}.
\end{equation}
Table 1 includes these values for squares, cubes, and fourth powers for 
$$
p(10)=42,\ p(20)=627, \ p(30)=5604, \ p(40)=37338,  \ {\text {\rm and}}\ \ p(50)=204226.
$$

\smallskip

	\begin{table}[H]
		\begin{tabular}{|c|l|l|l|} \hline
			$n$ & $\Delta_2(n)$ & $\Delta_3(n)$ & $\Delta_4(n)$ \\ \hline
			10& \ \ 6 & 15 & 26  \\ \hline
			20 & \ \ 2 & 102 & 2  \\ \hline
			30 &\ \ 21 & 228 & 957 \\ \hline
			40 &\ \ 89   & 1401 & 1078 \\ \hline
			50 & \ \  78 & 1153 & 9745 \\ \hline
		\end{tabular}\smallskip
		\caption{Sample values of $\Delta_k(n)$}
			\end{table}

\smallskip

\noindent
Based on numerics performed on a computer,  we make the following conjecture.

\begin{conjecture} If $k>1$ and $d\geq 0,$ then there are at most finitely many $n$ for which $$\Delta_k(n)\leq d.$$
\end{conjecture}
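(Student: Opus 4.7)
The plan is to reduce Conjecture~2 to a finite union of shifted analogues of Sun's conjecture. Since $\Delta_k(n) \leq d$ holds if and only if $p(n) - r = m^k$ for some integer $r$ with $|r| \leq d$ and some integer $m$, and there are only $2d+1$ admissible shifts, it suffices to prove that for each fixed $r \in \Z$ there are at most finitely many $n$ for which $p(n) - r$ is a perfect $k$th power.

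For $r = 0$ this is precisely Sun's conjecture, which Theorem~1 already settles for $n \in S$ and $k \geq 3$. For general $r$, I would mimic that proof by introducing the shifted analogue
\[
S_r := \left\{ n \ : \ p(n) - r = x^2 + \ell^a,\ x \in \Z,\ 2 \leq \ell < 100 \text{ prime},\ a \geq 1,\ \ell \nmid x \right\},
\]
so that for $n \in S_r$ with $k \geq 3$, the Bennett--Siksek classification (Theorem~1 of~\cite{BS}) constrains any candidate $k$th power $m^k = p(n) - r$ to an explicit finite list. A brute-force verification, exactly as in the proof of Theorem~1 above, would then eliminate all but finitely many candidates.

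The main obstacle is that this reduction is incomplete in two directions. First, it handles only $k \geq 3$; the case $k = 2$ lies outside the scope of Bennett--Siksek and would require a separate argument, most naturally a congruence obstruction showing that $p(n) - r$ fails to be a quadratic residue modulo some auxiliary integer for all but finitely many $n$, possibly by exploiting the modular-form machinery underlying the Ramanujan congruences. Second, and more seriously, the strategy covers only $n \in S_r$; for $n$ lying outside every $S_r$ we have essentially no control over the multiplicative structure of $p(n) - r$. A genuinely unconditional proof appears to require an effective form of the $abc$ conjecture, which would guarantee $|p(n) - m^k| \gg p(n)^{\delta}$ for some $\delta > 0$ whenever the two sides differ. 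Combined with the Hardy--Ramanujan asymptotic $p(n) \sim \frac{1}{4n\sqrt{3}} e^{\pi\sqrt{2n/3}}$, such a lower bound would immediately imply that any fixed $d$ is eventually exceeded. This effective $abc$-style input is the principal obstruction, and the reason that Conjecture~2 is stated only as a conjecture.
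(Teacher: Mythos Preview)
The statement you are addressing is Conjecture~1 in the paper, and the paper offers no proof of it; the authors present it purely as a conjecture supported by numerical evidence (Table~2 and Figure~1). There is therefore no ``paper's own proof'' against which to compare your proposal.

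Your proposal is likewise not a proof, and you say so explicitly in your final paragraph. The reduction you give---that $\Delta_k(n)\le d$ is equivalent to $p(n)-r=m^k$ for one of the $2d+1$ shifts $|r|\le d$---is correct, and the partial strategy via Bennett--Siksek for $n\in S_r$ and $k\ge 3$ is a legitimate analogue of Theorem~1. Your enumeration of the gaps (the case $k=2$, integers $n\notin S_r$, and the apparent need for an effective $abc$-type lower bound on $|p(n)-m^k|$) is accurate and matches the reasons the authors leave the statement as a conjecture. One small correction: you refer to the statement as ``Conjecture~2,'' but in the paper it is Conjecture~1. Beyond that labeling slip, there is no genuine mathematical error to flag; both you and the authors treat the assertion as open.
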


Assuming this conjecture, we study the ``growth'' of $\Delta_k(n).$ To this end,  we define
\begin{equation}
M_k(d):=\max \left \{ n \ : \ \Delta_k(n)\leq d\right\},
\end{equation}
the last $n$ for which $p(n)$ is within $d$ of a $k$th power. Table~2 gives conjectured values for $d\leq 10^{70}.$
\smallskip

\begin{table}[H]
	\begin{tabular}{|c|c|c|c|c|c|c|c|c|c|c|}\hline
		$d$ & $M_2(d)$ & $M_3(d)$  & $M_4(d)$ & $M_5(d)$ & $M_6(d)$ & $M_7(d)$ & $M_8(d)$ & $M_{50}(d)$ & $M_{100}(d)$ \\ \hline
	$0$ & $1$ & $1$ & $1$ & $1$ & $1$ & $1$ & $1$ & $1$ & $1$ \\ \hline
	$10^0$ & $ 35$ & $ 5$ & $ 7$ & $ 2$ & $ 2$ & $ 2$ & $ 2$ & $ 2$ & $ 2$ \\ \hline
	$10^5$ & $ 201$ & $ 133$ & $ 87$ & $ 82$ & $ 64$ & $ 71$ & $ 64$ & $ 45$ & $ 45$ \\ \hline
	$10^{10}$ & $ 527$ & $ 295$ & $ 265$ & $ 247$ & $ 227$ & $ 258$ & $ 196$ & $ 135$ & $ 135$ \\ \hline
	$10^{15}$ & $ 1100$ & $ 705$ & $ 482$ & $ 454$ & $ 445$ & $ 388$ & $ 387$ & $ 279$ & $ 269$ \\ \hline
	$10^{20}$ & $ 2058$ & $ 1019$ & $ 806$ & $ 745$ & $ 654$ & $ 653$ & $ 633$ & $ 444$ & $ 444$ \\ \hline
	$10^{25}$ & $ 2595$ & $ 1525$ & $ 1203$ & $ 1052$ & $ 971$ & $ 978$ & $ 890$ & $ 663$ & $ 662$ \\ \hline
	$10^{30}$ & $ 3804$ & $ 2135$ & $ 1636$ & $ 1564$ & $ 1337$ & $ 1244$ & $ 1280$ & $ 941$ & $ 941$ \\ \hline
	$10^{35}$ & $ 5030$ & $ 2815$ & $ 2444$ & $ 1930$ & $ 1886$ & $ 1747$ & $ 1620$ & $ 1239$ & $ 1221$ \\ \hline
	$10^{40}$ & $ 6340$ & $ 3714$ & $ 2849$ & $ 2513$ & $ 2366$ & $ 2246$ & $ 2047$ & $ 1565$ & $ 1562$ \\ \hline
	$10^{45}$ & $ 8253$ & $ 4424$ & $ 3516$ & $ 3178$ & $ 2866$ & $ 2754$ & $ 2685$ & $ 2170$ & $ 2170$ \\ \hline
	$10^{50}$ & $ 9646 $ & $ 5479$ & $ 4314$ & $ 3726$ & $ 3537$ & $ 3411$ & $ 3134$ & $ 2556$ & $ 2368$ \\ \hline
	$10^{55}$ & $ 11524$ & $ 6808$ & $ 5229$ & $ 4802$ & $ 4169$ & $ 3933$ & $ 3815$ & $ 2909$ & $ 2833$ \\ \hline
	$10^{60}$ & $ 13723$ & $ 8088$ & $ 6117$ & $ 5318$ & $ 4854$ & $ 4629$ & $ 4506$ & $ 3501$ & $ 3382$ \\ \hline
	$10^{65}$ & $ 15516$ & $ 8924$ & $ 6961$ & $ 6403$ & $ 5676$ & $ 5502$ & $ 5232$ & $ 4129$ & $ 3884$ \\ \hline
	$10^{70}$ & $ 18237$ & $ 10252$ & $ 8084$ & $ 7056$ & $ 6628$ & $ 6268$ & $ 6098$ & $ 4665$ & $ 4502$ \\ \hline
	\end{tabular}\smallskip	
	\caption{Conjectured values of $M_k(d)$}
\end{table}
\normalsize

Based on these numerics, we make the following refinement of Conjecture 1.

\begin{conjecture} If $k>1,$ then the following are true.

\noindent
(1) We have that $M_k(0)=1.$

\noindent
(2) For every $\varepsilon>0,$ we have that $M_k(d)=o(d^{\varepsilon})$.
\end{conjecture}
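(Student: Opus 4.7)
The conjecture has two parts, and our plan addresses them in sequence.

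For part (1), observe that $M_k(0)=1$ is equivalent to Sun's conjecture that $p(n)$ is never a $k$th power for $n>1$. The approach is to enlarge the set $S$ of Theorem 1 by invoking further Lebesgue--Nagell-type results for Diophantine equations of the form $x^2 + D = y^k$, where $D$ is drawn from larger classes of integers (for instance, products of small primes to prescribed powers, or primes $\ell \geq 100$). For each $n$ one then checks whether $p(n)$ admits a representation in such a family. To handle residual cases outside $S$, the plan is to sieve by Ramanujan-type congruences: if $n \equiv 4 \pmod{5}$ and $p(n) = m^k$, then $5 \mid m$, so $v_5(p(n)) \geq k$; comparing this against the explicit $5$-adic valuations furnished by Ramanujan's tower for $p(5^j n + \delta_j)$ eliminates many $n$, and similarly for the primes $7$ and $11$.

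For part (2), the plan is to quantify the heuristic that $k$th powers are too sparse near $p(n)$ to be close to it. Consecutive $k$th powers near $p(n)$ are separated by $\asymp k\cdot p(n)^{(k-1)/k}$, so the Hardy--Ramanujan asymptotic suggests
$$
\Delta_k(n) \ \gg \ p(n)^{(k-1)/k} \ \asymp \ \exp\!\left(\tfrac{\pi(k-1)}{k}\sqrt{2n/3}\right)
$$
for ``generic'' $n$. Upgrading this to an unconditional lower bound would force $\Delta_k(n) \leq d \Longrightarrow n = O((\log d)^2)$, which is far stronger than $o(d^\varepsilon)$. Concretely, the plan is to show that for each fixed $k$ and each $c$ with $|c|\leq d$, the equation $p(n) = m^k + c$ admits only finitely many solutions $(n,m)$, using two complementary inputs: (a) the Rademacher exact formula, whose dominant Bessel term is not a $k$th power to any useful precision unless $n$ is very small; and (b) $\ell$-adic obstructions from the modular form $1/\eta(\tau)$ and its associated Galois representations, which should make the event ``$p(n) - c \equiv m^k \pmod{\ell^v}$'' rare in $n$.

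The principal obstacle is that $p(n)$ has no clean algebraic Diophantine interpretation, so the usual machinery --- Baker's linear forms in logarithms, the theory of $S$-unit equations, Mordell--Weil --- does not apply directly to $p(n) - m^k = c$. A realistic attack seems to require a new input connecting the modular-form origin of $p(n)$ to its additive distance from $k$th powers. We expect the first rigorous progress to be a substantially weaker statement, for example $M_k(d) = O((\log d)^A)$ for specific small $k$ obtainable by explicit modular methods, or an ``on average'' result across $k$; the full conjecture, and in particular the bound $M_k(d) = o(d^\varepsilon)$ in the regime where $d$ is astronomically large compared with $n$, appears to lie beyond current Diophantine technology.
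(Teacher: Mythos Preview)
This statement is a \emph{conjecture} in the paper, not a theorem; the paper offers only numerical evidence (Tables~1--2, Figures~1--2) and no proof. Your submission is likewise not a proof but a research programme, as you yourself concede in the final paragraph (``the full conjecture \dots appears to lie beyond current Diophantine technology''). There is therefore no proof-to-proof comparison to make: neither side has one.

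On the substance of your outline: for part~(1), enlarging the set $S$ of Theorem~1 and sieving with the Ramanujan congruences modulo $5,7,11$ can at best handle a positive-density but strictly proper subset of $n$, since those congruences constrain $p(n)$ only on the progressions $5n+4$, $7n+5$, $11n+6$ (and their iterates), and the Lebesgue--Nagell machinery already misses targets --- the remark after Theorem~1 exhibits $n=2,16,19$. Nothing in your plan addresses the residual $n$ lying outside every such progression and outside every enlarged $S$. For part~(2), your heuristic $\Delta_k(n)\asymp p(n)^{(k-1)/k}$ is the natural random model and would indeed yield $M_k(d)=O((\log d)^2)$; but converting this into an unconditional lower bound is exactly the open problem. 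Neither the Rademacher exact formula nor the $\ell$-adic Galois representations attached to $1/\eta$ are presently known to say anything about the \emph{additive} distance from $p(n)$ to the nearest $k$th power, so invoking them is aspirational rather than operative. Your diagnosis of the difficulty is sound; the gap is simply that none of the proposed steps has been executed, and there is no indication that any of them can be with existing tools.
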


\begin{remark}
Conjecture 2 (1) is a recapitulation of Sun's Conjecture, and claim (2) asserts that
$$
\lim_{d\rightarrow +\infty} \frac{M_k(d)}{d^{\varepsilon}}=0.
$$
\end{remark}

Conjecture 2 asserts that the $M_k(d)$ numbers are bounded by any positive power of $d$.
It is natural to seek lower bounds. In Figure 1, we offer the values of $M_k(d)$ for $d\leq 10^{70}$ in the form of a graph to illustrate the slow growth of these values.

\medskip

\begin{figure}[H]
\begin{tikzpicture}
    \begin{axis}[
        width=14.5cm,
        height=7cm,
        xlabel={$i$ (where $d = 10^i$)},
        ylabel={\ \ },
        legend pos=north west,
        grid=major,
        scaled ticks=false,
    ]

  
    \addplot[color=red, thick, mark=star] coordinates {
(0,35) (1,35) (2,87) (3,143) (4,148) (5,201) (6,280) (7,312) (8,473) (9,483) (10,527) (11,775) (12,815) (13,851) (14,1020) (15,1100) (16,1352) (17,1352) (18,1505) (19,1675) (20,2058) (21,2084) (22,2361) (23,2361) (24,2487) (25,2595) (26,3101) (27,3238) (28,3479) (29,3517) (30,3804) (31,3947) (32,4047) (33,4412) (34,4582) (35,5030) (36,5253) (37,5479) (38,5822) (39,6032) (40,6340) (41,6576) (42,6964) (43,7210) (44,7530) (45,8253) (46,8253) (47,8615) (48,9021) (49,9475) (50,9646) (51,10013) (52,10932) (53,10932) (54,11477) (55,11524) (56,12000) (57,12623) (58,12623) (59,13309) (60,13723) (61,14329) (62,14352) (63,14736) (64,15516) (65,15516) (66,16342) (67,17239) (68,17239) (69,17399) (70,18237)

    };
    \addlegendentry{$M_2(d)$}

    \addplot[color=green, thick, mark=star] coordinates {

(0,5) (1,22) (2,57) (3,57) (4,75) (5,133) (6,146) (7,229) (8,261) (9,270) (10,295) (11,361) (12,505) (13,505) (14,592) (15,705) (16,709) (17,811) (18,914) (19,995) (20,1019) (21,1194) (22,1294) (23,1294) (24,1376) (25,1525) (26,1693) (27,1812) (28,1848) (29,2032) (30,2135) (31,2364) (32,2566) (33,2579) (34,2661) (35,2815) (36,3035) (37,3102) (38,3315) (39,3616) (40,3714) (41,3800) (42,3851) (43,4214) (44,4424) (45,4424) (46,4786) (47,4931) (48,4931) (49,5105) (50,5479) (51,5779) (52,6023) (53,6093) (54,6327) (55,6808) (56,7005) (57,7005) (58,7184) (59,7440) (60,8088) (61,8102) (62,8499) (63,8650) (64,8650) (65,8924) (66,9292) (67,9413) (68,9870) (69,9907) (70,10252)


    };
    \addlegendentry{$M_3(d)$}

    \addplot[color=black, thick, mark=star] coordinates {

(0,7) (1,20) (2,26) (3,57) (4,79) (5,87) (6,110) (7,170) (8,170) (9,265) (10,265) (11,287) (12,420) (13,420) (14,463) (15,482) (16,571) (17,627) (18,649) (19,773) (20,806) (21,938) (22,952) (23,1120) (24,1174) (25,1203) (26,1325) (27,1470) (28,1534) (29,1534) (30,1636) (31,1789) (32,1973) (33,1978) (34,2162) (35,2444) (36,2444) (37,2444) (38,2582) (39,2849) (40,2849) (41,3001) (42,3107) (43,3320) (44,3387) (45,3516) (46,3795) (47,3879) (48,3952) (49,4250) (50,4314) (51,4525) (52,4595) (53,4943) (54,4943) (55,5229) (56,5299) (57,5511) (58,6078) (59,6078) (60,6117) (61,6192) (62,6395) (63,6619) (64,6752) (65,6961) (66,7248) (67,8084) (68,8084) (69,8084) (70,8084)


    };
    \addlegendentry{$M_4(d)$}

    \addplot[color=yellow, thick, mark=star] coordinates {

(0,2) (1,10) (2,22) (3,32) (4,82) (5,82) (6,87) (7,135) (8,170) (9,200) (10,247) (11,277) (12,361) (13,387) (14,387) (15,454) (16,521) (17,637) (18,637) (19,706) (20,745) (21,796) (22,875) (23,919) (24,973) (25,1052) (26,1127) (27,1205) (28,1269) (29,1415) (30,1564) (31,1613) (32,1733) (33,1898) (34,1898) (35,1930) (36,2121) (37,2125) (38,2281) (39,2466) (40,2513) (41,2613) (42,2807) (43,2841) (44,3023) (45,3178) (46,3313) (47,3365) (48,3671) (49,3671) (50,3726) (51,3907) (52,4100) (53,4159) (54,4386) (55,4802) (56,4802) (57,4802) (58,5054) (59,5155) (60,5318) (61,5776) (62,5776) (63,5824) (64,5982) (65,6403) (66,6524) (67,6896) (68,6896) (69,6896) (70,7056)


    };
    \addlegendentry{$M_5(d)$}

    \addplot[color=cyan, thick, mark=star] coordinates {

    (0,2) (1,11) (2,21) (3,35) (4,56) (5,64) (6,126) (7,126) (8,138) (9,167) (10,227) (11,268) (12,369) (13,369) (14,390) (15,445) (16,454) (17,495) (18,588) (19,588) (20,654) (21,745) (22,823) (23,831) (24,905) (25,971) (26,1073) (27,1191) (28,1207) (29,1275) (30,1337) (31,1456) (32,1566) (33,1644) (34,1814) (35,1886) (36,1892) (37,2054) (38,2058) (39,2219) (40,2366) (41,2397) (42,2534) (43,2620) (44,2744) (45,2866) (46,2927) (47,3177) (48,3228) (49,3380) (50,3537) (51,3729) (52,3815) (53,3862) (54,4106) (55,4169) (56,4423) (57,4637) (58,4653) (59,4776) (60,4854) (61,5126) (62,5245) (63,5322) (64,5591) (65,5676) (66,5865) (67,6084) (68,6157) (69,6369) (70,6628)


    };
    \addlegendentry{$M_6(d)$}

     \addplot[color=violet, thick,mark=star] coordinates {
    
(0,2) (1,14) (2,15) (3,27) (4,59) (5,71) (6,95) (7,146) (8,146) (9,183) (10,258) (11,258) (12,258) (13,320) (14,388) (15,388) (16,438) (17,494) (18,535) (19,595) (20,653) (21,734) (22,771) (23,800) (24,897) (25,978) (26,1045) (27,1107) (28,1174) (29,1242) (30,1244) (31,1502) (32,1608) (33,1644) (34,1747) (35,1747) (36,1768) (37,1918) (38,1951) (39,2035) (40,2246) (41,2376) (42,2376) (43,2565) (44,2595) (45,2754) (46,2825) (47,2927) (48,3086) (49,3193) (50,3411) (51,3411) (52,3539) (53,3660) (54,3761) (55,3933) (56,4122) (57,4147) (58,4390) (59,4602) (60,4629) (61,4822) (62,4882) (63,5127) (64,5319) (65,5502) (66,5557) (67,5665) (68,5842) (69,6134) (70,6268)

    };
    \addlegendentry{$M_7(d)$}

 \addplot[color=orange, thick, mark=star] coordinates {

 (0,2) (1,6) (2,17) (3,31) (4,54) (5,64) (6,101) (7,118) (8,137) (9,167) (10,196) (11,215) (12,265) (13,283) (14,336) (15,387) (16,440) (17,467) (18,533) (19,552) (20,633) (21,639) (22,686) (23,758) (24,854) (25,890) (26,931) (27,1012) (28,1089) (29,1259) (30,1280) (31,1322) (32,1379) (33,1509) (34,1587) (35,1620) (36,1693) (37,1776) (38,1941) (39,2015) (40,2047) (41,2139) (42,2309) (43,2388) (44,2493) (45,2685) (46,2854) (47,2901) (48,2929) (49,3070) (50,3134) (51,3320) (52,3510) (53,3596) (54,3609) (55,3815) (56,4011) (57,4121) (58,4147) (59,4326) (60,4506) (61,4686) (62,4844) (63,4899) (64,5037) (65,5232) (66,5309) (67,5501) (68,5682) (69,5797) (70,6098)


    };
    \addlegendentry{$M_8(d)$}

     \addplot[color=blue, thick, mark=star] coordinates {
     (0,2) (1,6) (2,13) (3,21) (4,32) (5,45) (6,60) (7,76) (8,94) (9,114) (10,135) (11,159) (12,184) (13,271) (14,272) (15,279) (16,302) (17,334) (18,369) (19,406) (20,444) (21,485) (22,609) (23,611) (24,626) (25,663) (26,710) (27,760) (28,927) (29,928) (30,941) (31,980) (32,1036) (33,1218) (34,1221) (35,1239) (36,1288) (37,1485) (38,1488) (39,1509) (40,1565) (41,1732) (42,1744) (43,1958) (44,1960) (45,2170) (46,2170) (47,2176) (48,2369) (49,2372) (50,2556) (51,2559) (52,2733) (53,2737) (54,2902) (55,2909) (56,3214) (57,3214) (58,3360) (59,3362) (60,3501) (61,3636) (62,3765) (63,3770) (64,3894) (65,4129) (66,4242) (67,4352) (68,4563) (69,4664) (70,4665)


    };
    \addlegendentry{$M_{50}(d)$}

  


    \end{axis}
\end{tikzpicture}
\caption{Growth of $M_k(d)$ with $d\leq 10^{70}$}
\end{figure}
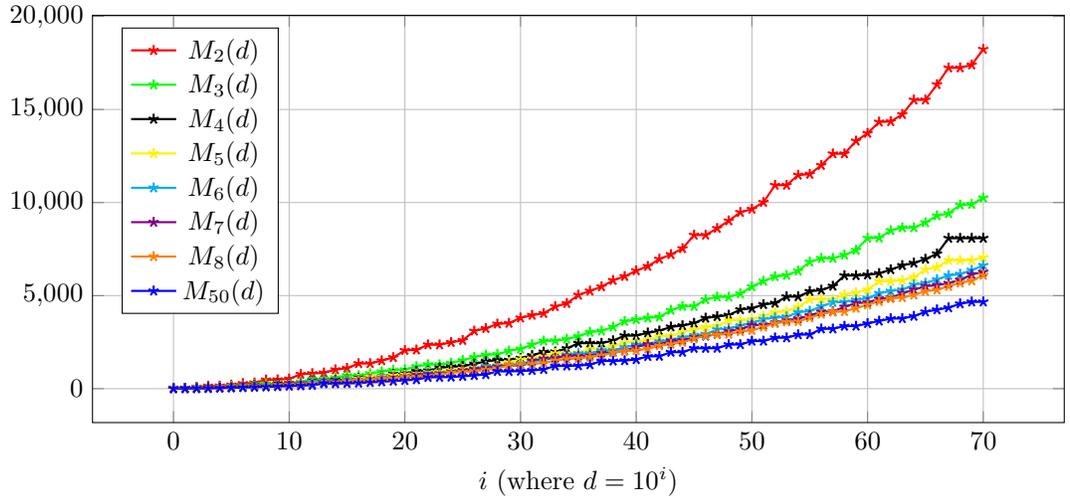
\smallskip

It is tempting to seek a model that asymptotically gives the values $M_k(d)$ as $d\rightarrow +\infty.$ A natural approach is to
apply the standard non-linear least squares fitting method. We did so using the values at powers of 10 with $d\leq N,$ and we computed approximations $f_{k}(N; d)$ that are polynomials in $\log(d).$  For $k=50$ and $N\in \{10^{12}, 10^{70}\}$, we obtain two very different approximations
\begin{displaymath}
\begin{split}
f_{50}(10^{12};d) & \approx 2.018\times10^{-13} \cdot (\log d)^3 + 0.174 \cdot (\log d)^2 + 1.836 \cdot \log d + 0.748,\\
f_{50}(10^{70}; d) &\approx 1.91\times10^{-8} \cdot \log(d)^5 - 8.77\times10^{-6} \cdot \log(d)^4 + 1.25\times10^{-3} \cdot \log(d)^3 \\
&\ \ \hskip1.75in + 0.1118\cdot\log(d)^2+2.260\cdot\log(d)+8.946.
\end{split}
\end{displaymath}
It is natural to speculate whether the $M_k(d)$ are asymptotically polynomials in $\log(d)$ as $d\rightarrow \infty$. In Figure 2 we juxtapose some of the $M_k(d)$ in Table~2 with $\log(d)$ and $\log(d)^2/5$. The trends in this figure suggest that these values are growing much faster than any scalar multiple of $\log(d)$, but do not preclude the possibility that they are asymptotically higher degree (perhaps quadratic or cubic) polynomials in $\log(d).$

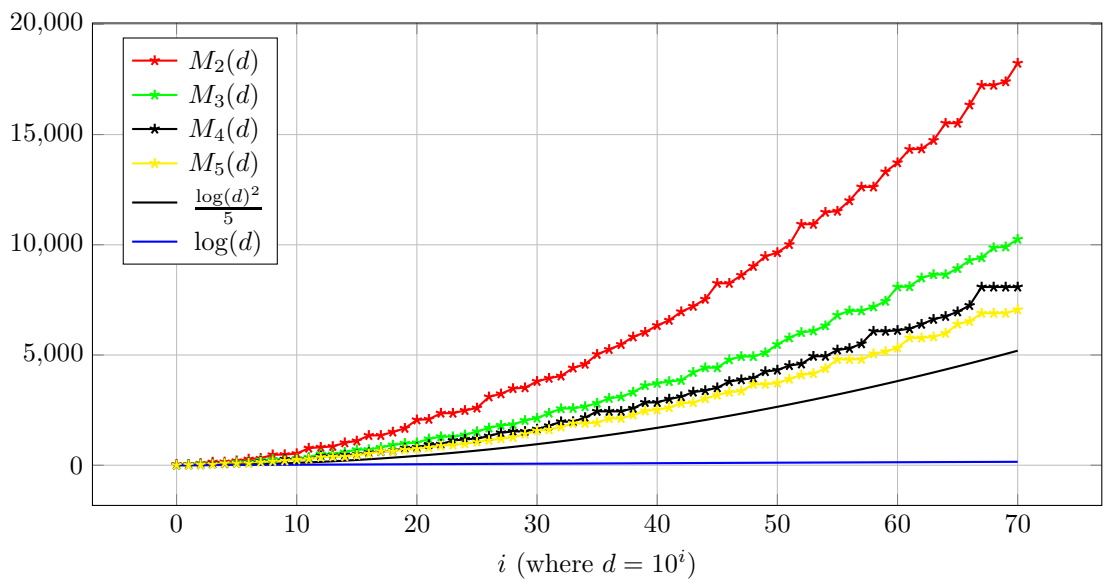
\begin{figure}[H]
\begin{tikzpicture}
    \begin{axis}[
        width=15cm,
        height=8cm,
        xlabel={$i$ (where $d = 10^i$)},
        ylabel={\ \ },
        legend pos=north west,
        grid=major,
        scaled ticks=false,
    ]

    \addplot[color=red, thick, mark=star] coordinates {
(0,35) (1,35) (2,87) (3,143) (4,148) (5,201) (6,280) (7,312) (8,473) (9,483) (10,527) (11, 775) (12, 815)(13, 851) (14, 1020) (15, 1100)(16, 1352)(17, 1352) (18, 1505)(19,1675) (20, 2058)(21, 2084) (22, 2361) (23, 2361) (24, 2487) (25, 2595)(26, 3101)(27, 3238) (28, 3479) (29, 3517) (30, 3804)(31,3947) (32,4047) (33,4412) (34,4582) (35,5030) (36,5253) (37,5479) (38,5822) (39,6032) (40,6340) (41,6576) (42,6964) (43,7210) (44,7530) (45,8253) (46,8253) (47,8615) (48,9021) (49,9475) (50,9646)(51,10013) (52,10932) (53,10932) (54,11477) (55,11524) (56,12000) (57,12623) (58,12623) (59,13309) (60,13723) (61,14329) (62,14352) (63,14736) (64,15516) (65,15516) (66,16342) (67,17239) (68,17239) (69,17399) (70,18237)

    };
    \addlegendentry{$M_2(d)$}

    \addplot[color=green, thick, mark=star] coordinates {

(0,5) (1,22) (2,57) (3,57) (4,75) (5,133) (6,146) (7,229) (8,261) (9,270) (10,295) (11, 361) (12, 505)(13, 505) (14,592) (15,705)(16,709) (17, 811) (18, 914) (19, 995) (20, 1019)(21, 1194) (22, 1294) (23, 1294) (24, 1376) (25, 1525)(26, 1693) (27, 1812) (28, 1848) (29, 2032) (30, 2135)(31,2364) (32,2566) (33,2579) (34,2661) (35,2815) (36,3035) (37,3102) (38,3315) (39,3616) (40,3714) (41,3800) (42,3851) (43,4214) (44,4424) (45,4424) (46,4786) (47,4931) (48,4931) (49,5105) (50,5479)(51,5779) (52,6023) (53,6093) (54,6327) (55,6808) (56,7005) (57,7005) (58,7184) (59,7440) (60,8088) (61,8102) (62,8499) (63,8650) (64,8650) (65,8924) (66,9292) (67,9413) (68,9870) (69,9907) (70,10252)

    };
    \addlegendentry{$M_3(d)$}

    \addplot[color=black, thick, mark=star] coordinates {

(0,7) (1,20) (2,26) (3,57) (4,79) (5,87) (6,110) (7,170) (8,170) (9,265) (10,265) (11, 287) (12, 420)(13, 420) (14, 463) (15, 482)(16, 571) (17, 627) (18, 649) (19, 773) (20, 806) (21, 938) (22, 952) (23, 1120) (24, 1174) (25, 1203)(26, 1325) (27, 1470) (28, 1534) (29, 1534) (30, 1636)(31,1789) (32,1973) (33,1978) (34,2162) (35,2444) (36,2444) (37,2444) (38,2582) (39,2849) (40,2849) (41,3001) (42,3107) (43,3320) (44,3387) (45,3516) (46,3795) (47,3879) (48,3952) (49,4250) (50,4314)(51,4525) (52,4595) (53,4943) (54,4943) (55,5229) (56,5299) (57,5511) (58,6078) (59,6078) (60,6117) (61,6192) (62,6395) (63,6619) (64,6752) (65,6961) (66,7248) (67,8084) (68,8084) (69,8084) (70,8084)

    };
    \addlegendentry{$M_4(d)$}

    \addplot[color=yellow, thick, mark=star] coordinates {

(0,2) (1,10) (2,22) (3,32) (4,82) (5,82) (6,87) (7,135) (8,170) (9,200) (10,247) (11, 277) (12, 361)(13, 387) (14, 387) (15, 454)(16, 521) (17, 637) (18, 637) (19, 706)(20, 745)(21, 796) (22, 875) (23, 919) (24, 973) (25, 1052)(26, 1127) (27, 1205) (28, 1269) (29, 1415) (30, 1564)(31,1613) (32,1733) (33,1898) (34,1898) (35,1930) (36,2121) (37,2125) (38,2281) (39,2466) (40,2513) (41,2613) (42,2807) (43,2841) (44,3023) (45,3178) (46,3313) (47,3365) (48,3671) (49,3671) (50,3726)(51,3907) (52,4100) (53,4159) (54,4386) (55,4802) (56,4802) (57,4802) (58,5054) (59,5155) (60,5318) (61,5776) (62,5776) (63,5824) (64,5982) (65,6403) (66,6524) (67,6896) (68,6896) (69,6896) (70,7056)

    };
    \addlegendentry{$M_5(d)$}
    
        \addplot[color=black, thick, mark=] coordinates {
    
    (0,0) (1, 1.06) (2, 4.2) (3,9.5) (4, 16.9 ) (5, 26.4) (6, 38.0) (7, 51.8) (8, 65.5) (9, 82.4) (10, 106.0) (11, 128.3) (12, 152.6)(13, 179.2)(14, 207.8) (15,238.5)(16, 271.4) (17, 306.4) (18, 343.5) (19, 382.7)(20,424.1)(21, 467.6) (22, 513.2) (23, 560.9) (24, 610.7) (25, 662.7)(26, 716.8) (27, 773.0) (28, 831.3) (29, 891.7) (30, 954.3)(31, 1019.0)(32, 1085.8) (33, 1154.7) (34, 1225.7)(35, 1298.9) (36, 1374.2) (37, 1451.6) (38, 1531.8) (39, 1612.8) (40, 1696.6) (41, 1782.4)(42, 1870.5)(43, 1960.6) (44, 2052.8) (45, 2147.2)(46, 2243.7) (47, 2342.3) (48, 2443.1) (49, 2545.9) (50, 2650.9)(51,2758)(52,2868.2)(53,2978.6)(54, 3092.0)(55, 3207.6)(56,3325.3)(57, 3445.1)(58, 3567.1)(59,3691.1)(60, 3817.3)(61, 3945.6)(62, 4076.1)(63,4208.6)(64, 4343.3)(65,4480.1)(66, 4619.0)(67,4760.0)(68,4903.2)(69, 5048.4)(70,5195.8)

    };
    \addlegendentry{$\ \frac{\log(d)^2}{5}$}

    \addplot[color=blue, thick, mark=] coordinates {
    
    (0,0) (1,2.30) (2,4.6) (3,6.9) (4,9.2) (5,11.5) (6,13.8) (7,16.1) (8,18.4) (9,20.72) (10,23.0) (11, 25.3)(12, 27.6)(13, 29.9)(14, 32.2)(15, 34.5)(16, 36.8) (17, 39.1)(18, 41.4)(19, 43.7)(20, 46.0)(21, 48.3) (22, 50.6) (23, 52.9) (24, 55.2) (25, 57.5)(26, 59.8) (27, 62.1) (28, 64.4) (29, 66.7) (30, 69.0)(31, 71.3)(32, 73.6) (33, 75.9)(34, 78.2)(35, 80.5)(36, 82.8)(37, 85.1)(38, 87.4)(39, 89.8)(40, 92.1)(41, 94.4)(42, 96.7)(43, 99.0)(44, 101.3)(45, 103.6)(46, 105.9)(47, 108.2)(48, 110.5)(49, 112.8)(50, 115.1)(51,117)(52,119)(53,121)(54,123)(55,125)(56,127)(57,129)(58,131)(59,133)(60,135)(61,137)(62,139)(63,141)(64,143)(65,145)(66,147)(67,149)(68,151)(69,153)(70,155)

    };
    \addlegendentry{$\ \log(d)$}

 \comment    \addplot[color=violet, thick,mark=] coordinates {
    
(0,0) (1,2.30) (2,4.6) (3,6.9) (4,9.2) (5,11.5) (6,13.8) (7,16.1) (8,18.4) (9,20.72) (10,23.0) (11,25.3) (12,27.6) (13,29.9) (14,32.2) (15,34.5) (16,36.8) (17,39.1) (18,41.4) (19,43.7) (20,46) (21,48.3) (22,50.6) (23,52.9) (24, 55.2) (25, 57.5) (26, 59.8) (27, 62.1) (28, 64.4) (29, 66.7) (30, 69) (31,71.3) (32,73.6) (33, 75.9) (34, 78.2) (35, 80.5) (36, 82.8) (37, 85.1) (38,87.4) (39, 89.8) (40, 92.1) (41, 94.4) (42, 96.7) (43, 99) (44, 101.3) (45, 103.6) (46, 105.9) (47, 108.2) (48, 110.5) (49, 112.8) (50, 115.1) (51, 117.4) (52, 119.7) (53, 122) (54, 124.3) (55, 126.6) (56, 128.9) (57, 131.2) (58, 133.5) (59, 135.8) (60, 138.1) (61, 140.4) (62, 142.7) (63, 145) (64, 147.3) (65, 149.6) (66, 151.9) (67, 154.2) (68, 156.5) (69, 158.8) (70, 161.1)

    };
    \addlegendentry{$\ \ \log(d)^2$}
\end{comment}

    \end{axis}
\end{tikzpicture}
\caption{Growth of $M_k(d)$ with $k\in \{2, 3, 4, 5\}$}
\end{figure}
\smallskip

\noindent
It would be very interesting to carry out even further numerics to address the following problem.

\begin{problem}  For each $k,$ conjecture an asymptotic formula for the $M_k(d).$ 
\end{problem}

To conclude, we offer a curious conjecture about the sequences $\{M_1(d), M_2(d), M_3(d),\dots\}$ for each fixed $d$.
The first two rows of Table 2 suggest that these sequences decrease and stabilize at the values 1 and 2 respectively. We have done many further numerics, and we believe that each of these sequences eventually stabilizes. Table~3 below suggests that these limiting values are the positive integers in order repeated with curious multiplicities.

\begin{table}[H]
	\begin{tabular}{|c|c|c|c|c|c|c|c|c|c|c|c|c|}\hline
	$d$ & $M_2(d)$ & $M_3(d)$  & $M_4(d)$ & $M_5(d)$ & $M_6(d)$ & $M_7(d)$ & $M_8(d)$ & $\cdots$ & $M_{50}(d)$ & $\cdots$ & $M_{100}(d)$ \\ \hline
	$0$ & $1$ & $1$ & $1$ & $1$ & $1$ & $1$ & $1$ & $\cdots$ & $1$ & $\cdots$ & $1$ \\ \hline
	$1$ & $ 35$ & $ 5$ & $ 7$ & $ 2$ & $ 2$ & $ 2$ & $ 2$ & $\cdots$ & $ 2$ & $\cdots$ & $ 2$ \\ \hline
	$2$ & $ 35$ & $ 22$ & $ 20$ & $ 9$ & $ 3$ & $ 3$ & $ 3$ & $\cdots$ & $ 3$ & $\cdots$ & $ 3$ \\ \hline
	$3$ & $ 35$ & $ 22$ & $ 20$ & $ 9$ & $ 3$ & $ 3$ & $ 3$ & $\cdots$ & $ 3$ & $\cdots$ & $ 3$ \\ \hline
	$4$ & $ 35$ & $ 22$ & $ 20$ & $ 9$ & $ 4$ & $ 4$ & $ 4$ & $\cdots$ & $ 4$ & $\cdots$ & $4$ \\ \hline
	$5$ & $ 35$ & $ 22$ & $ 20$ & $ 9$ & $ 4$ & $ 4$ & $ 4$ & $\cdots$ & $ 4$ & $\cdots$ & $ 4$ \\ \hline
    $6$ & $ 35$ & $ 22$ & $ 20$ & $ 9$ & $ 5$ & $ 5$ & $ 5$ & $\cdots$ & $5$ & $\cdots$ & $ 5$ \\ \hline
	\end{tabular}\smallskip	
	\caption{Conjectured values of $M_k(d)$}
\end{table}
\normalsize

\begin{conjecture} For each fixed non-negative integer $d$, there is an integer $L_d$ for which
$$
M_k(d)=L_d
$$
for all sufficiently large $k.$
\end{conjecture}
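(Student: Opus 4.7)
The plan is to identify $L_d$ explicitly as $N_d := \max\{n : p(n) \leq d+1\}$, the value visible in Table~3 and consistent with the abstract's refined conjecture. One direction is immediate: for every $k \geq 1$, since $p(N_d) \leq d+1$ we have $|p(N_d) - 1^k| = p(N_d) - 1 \leq d$, so $N_d$ lies in $\{n : \Delta_k(n) \leq d\}$ and hence $M_k(d) \geq N_d$. The entire content of the conjecture is therefore the uniform upper bound $M_k(d) \leq N_d$ for all sufficiently large $k$.

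The first step is to rephrase this upper bound as a Diophantine finiteness statement. If $n > N_d$ and $\Delta_k(n) \leq d$, then $p(n) > d+1$, so neither $m = 0$ nor $m = 1$ can witness $\Delta_k(n) \leq d$. Some integer $m \geq 2$ must therefore satisfy $|p(n) - m^k| \leq d$, forcing $p(n) \geq 2^k - d$. The conjecture is thus equivalent to the assertion that the set
$$
T_d := \{(n, m, k) \, : \, n > N_d,\ m \geq 2,\ k \geq 2,\ |p(n) - m^k| \leq d\}
$$
is finite: if so, letting $K$ be the largest $k$ appearing in $T_d$, for every $k > K$ and every $n > N_d$ we would have $\Delta_k(n) > d$, giving $M_k(d) = N_d$.

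For each fixed $n > N_d$ the slice of $T_d$ over $n$ is elementarily finite: $m \leq (p(n)+d)^{1/2}$ since $k \geq 2$, and for each such $m$ the exponential sequence $(m^k)_{k \geq 1}$ lands in the bounded window $[p(n)-d,\, p(n)+d]$ at most once past an initial segment. Dually, for fixed $k$, Conjecture~1 (which I take as input) provides finitely many admissible $n$. The main obstacle is to promote these two per-slice finiteness statements into a single uniform one across all $(n, k)$.

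The natural weapon is to couple the Hardy--Ramanujan asymptotics for $p(n)$ with transcendence-theoretic bounds. If $|p(n) - m^k| \leq d$ and $m^k \geq 2d$, then
$$
\pi \sqrt{2n/3} = k \log m + O(\log n),
$$
a linear relation among $\pi$, $\sqrt{n}$, and $\log m$ that one might feed into Baker-type lower bounds for linear forms in logarithms in an attempt to force $(n, m, k)$ to be bounded. The substantial difficulty --- and what I expect to be the central obstacle --- is that $p(n)$ is not an algebraic function of $n$, so the Thue and $S$-unit machinery underpinning results like the Bennett--Siksek theorem invoked in Theorem~1 does not apply directly. As a feasible first target, I would attempt to prove the conjecture for small $d$ by combining exhaustive search over small $n$ with an effective upper cutoff on $k$ derived from the growth rate of $p(n)$ against the gap $\sim km^{k-1}$ between consecutive $k$th powers.
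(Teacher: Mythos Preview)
The statement you are attempting to prove is \emph{Conjecture~3} in the paper, and the paper offers no proof of it whatsoever: it is presented purely as a conjecture supported by the numerics in Table~3, and is then sharpened into Conjecture~4 (which identifies the putative limit $L_d$ with $L(d)=\max\{n:p(n)-1\le d\}$, exactly your $N_d$). So there is no ``paper's own proof'' to compare your proposal against.

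That said, your proposal is a reasonable research outline, not a proof, and you essentially acknowledge this. What you have established rigorously is only the easy half: the inequality $M_k(d)\ge N_d$ for every $k$, via the witness $m=1$. Your reduction of the hard half to the finiteness of the set $T_d$ is also correct and clarifying. But from that point on nothing is proved. Taking Conjecture~1 as input gives you finiteness of each $k$-slice of $T_d$, and the elementary argument gives finiteness of each $n$-slice, yet neither of these yields the required \emph{uniform} finiteness across all $(n,k)$; a set can have every horizontal and vertical slice finite and still be infinite. The linear-forms-in-logarithms idea you sketch runs into precisely the obstacle you name: the Hardy--Ramanujan formula only gives $\log p(n)=\pi\sqrt{2n/3}+O(\log n)$, and the error term is not controlled in the algebraic way Baker's theory requires, so one cannot turn $|p(n)-m^k|\le d$ into a genuine linear form in logarithms of algebraic numbers. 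Your fallback ``feasible first target'' of handling small $d$ by search plus an effective cutoff on $k$ would still require, for each fixed small $d$, proving that no $p(n)$ with $n>N_d$ lies within $d$ of a perfect power of exponent below that cutoff --- which already subsumes Sun's Conjecture (the case $d=0$) and is wide open.

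In short: your lower bound and your reformulation are correct and match the paper's Conjecture~4, but the conjecture remains open and your proposal does not close it.
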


We refine this claim  by offering a conjectural partition theoretic description of the limiting values and their multiplicities,
which is based on the number of partitions of $n$ without parts of size 1, denoted by $p_1(n).$   Clearly, the generating function for $p_1(n)$ is
$$
\sum_{n=0}^{\infty}p_1(n)q^n=\prod_{n=2}^{\infty}\frac{1}{1-q^n}=1+q^2+q^3+2q^4+2q^5+4q^6+4q^7+7q^8+8q^9+12q^{10}+\dots.
$$
In terms of $p_1(n)$, we define the
the auxiliary partition 
\begin{equation}
\psi(n) = \sum_{j=1}^{n} p_1(j),
\end{equation}
the number of non-empty partitions of size $\leq n$ without parts of size 1. It is not difficult to confirm that its generating function is given by 
$$
\Psi(q)=\sum_{n=1}^{\infty}\psi(n)q^n=
\sum_{n=2}^{\infty}\frac{q^n}{(1-q)(1-q^2)\cdots (1-q^n)}=q^2+2q^3+4q^4+6q^5+10q^6+14q^7+21q^8+29q^9+41q^{10}+\dots.
$$
The astute reader will notice that $\psi(n)=p(n)-1$ for all $n.$
In loose analogy with the $M_k(d),$ we define the function
\begin{equation}
L(d):= \max \left\{ n \ : \ \psi(n)\leq d\right\} =\max \left\{ n \ : \ p(n)-1\leq d\right\}.
\end{equation}
Clearly, we have that
$$
L(d) = 
\begin{cases} 
\ 1 & \text{for } d = 0, \\
\ 2 & \text{for } d = 1, \\
\ 3 & \text{for } d = 2, 3, \\
\ 4 & \text{for } d = 4, 5, \\
\ 5 & \text{for } d = 6, 7, 8, 9, \\
\ \vdots & \\
\ n & \text{repeated } p_1(n+1) \text{ times}.
\end{cases}
$$
We believe that the values of $L(d)$ are the limiting values $L_d$ which make up the alleged curious sequence of all the positive integers in order with curious multiplicities, a claim which essentially means that  $1$ is generally the closest $k$th power among the partition numbers.

\begin{conjecture}
For each non-negative integer $d$, there is a positive integer $N_d$ such that for $k\geq N_d$ we have
$$
M_k(d) = L(d).
$$
\end{conjecture}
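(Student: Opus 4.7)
The plan is to establish $M_k(d) = L(d)$ by proving two inequalities: a lower bound $L(d) \leq M_k(d)$ valid for every $k$, and an upper bound $M_k(d) \leq L(d)$ that we expect to hold once $k$ is large in terms of $d$. The lower bound is essentially automatic: taking $m=1$ in the minimum defining $\Delta_k(n)$ yields
$$
\Delta_k(n) \leq |p(n)-1| = p(n)-1 = \psi(n),
$$
so any $n \leq L(d)$ satisfies $\Delta_k(n) \leq \psi(L(d)) \leq d$, giving $L(d) \leq M_k(d)$ for all $k \geq 1$.

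For the reverse inequality, the objective is to show that for every $n > L(d)$ and all sufficiently large $k$, $\Delta_k(n) > d$. Since $p(n) - 1 > d$ for such $n$, the candidates $m \in \{0,1\}$ already contribute distances exceeding $d$, so the task reduces to verifying $|p(n) - m^k| > d$ for every $m \geq 2$. The first thing I would do is handle the range where $p(n)$ is small relative to $2^k$: if $p(n) < 2^k - d$, then for every $m \geq 2$ we have $|p(n) - m^k| \geq 2^k - p(n) > d$, so on any finite collection of $n$ just above $L(d)$ it is enough to take $k \geq \log_2(p(n)+d+1)$. This takes care of the ``small $n$'' portion of the problem and suggests a candidate for $N_d$ in terms of $p(L(d)+1), p(L(d)+2), \ldots$ up to some cutoff.

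The main obstacle is the complementary regime, where $n$ is so large that $p(n) \geq 2^k$, because then $n$ is unbounded and the set of potentially troublesome $m \geq 2$ grows with $n$. I would attack this via two ingredients. First, I would use the Rademacher exact formula to derive the sharp gap estimate $p(n+1) - p(n) \sim \pi p(n)/\sqrt{6n}$, which, when $p(n) \approx m^k$ and $n \approx \tfrac{3}{2\pi^2}(k\log m)^2$, vastly exceeds $2d$ for $k$ large. This forces at most one partition number to lie in each forbidden window $[m^k - d, m^k + d]$, reducing the problem to an isolated near miss for each $(m,k)$. Second, one must actually rule out this single possible near miss, and this is the genuinely hard step: it is a quantitative, uniform-in-$k$ strengthening of Sun's Conjecture, equivalent to asserting that the Diophantine inequality $|p(n) - y^k| \leq d$ has no solutions with $n > L(d)$ once $k \geq N_d$. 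I would expect the necessary input to come from a substantial extension of the Bennett--Siksek methods underlying Theorem~1, this time applied uniformly to perturbed equations of shape $x^2 + \ell^a - y^k = r$ with $|r| \leq d$ for $n \in S$, and combined with an explicit analytic lower bound for $\min_{m\geq 2}|p(n)-m^k|$ derived from the Rademacher series.
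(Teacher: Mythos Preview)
The paper does not prove this statement; it is labelled \emph{Conjecture~4} and is left entirely open, supported only by the numerics in Tables~3 and~4. So there is no ``paper's own proof'' to compare against.

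Your lower bound $L(d)\le M_k(d)$ is correct and unconditional: since $\psi(n)=p(n)-1$ is nondecreasing, for $n=L(d)$ the choice $m=1$ gives $\Delta_k(L(d))\le p(L(d))-1=\psi(L(d))\le d$, hence $L(d)\le M_k(d)$ for every $k\ge 1$. This half is genuinely easy and you handle it cleanly.

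The reverse inequality, however, is not a proof but an outline with an explicitly acknowledged hole. Your ``small $n$'' case is fine: for any finite set of $n>L(d)$ one can choose $k$ large enough that $2^k>p(n)+d$, forcing $\Delta_k(n)=p(n)-1>d$. The trouble is exactly where you locate it: for each fixed $k\ge N_d$ you still need $\Delta_k(n)>d$ for \emph{every} $n>L(d)$, including arbitrarily large $n$. That assertion alone already contains Conjecture~1 (finiteness of $\{n:\Delta_k(n)\le d\}$) for every $k\ge N_d$, and in particular the case $d=0$ contains Sun's Conjecture for all sufficiently large exponents. The paper treats all of these as open; Theorem~1 only shows that Bennett--Siksek rules out $p(n)=y^k$ for $n\in S$ and $k\ge 3$, and the authors do not claim any extension to perturbed equations $|p(n)-y^k|\le d$ or to $n\notin S$. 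Your appeal to a ``substantial extension of the Bennett--Siksek methods'' and to Rademacher gap estimates is reasonable as motivation, but it is speculation, not an argument: the gap estimate $p(n+1)-p(n)\sim \pi p(n)/\sqrt{6n}$ only limits how many $n$ can fall in a window $[m^k-d,m^k+d]$, it does not exclude a single hit, and excluding that single hit is the entire content of the conjecture.

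In short: one direction is proved, the other is equivalent to a uniform-in-$k$ strengthening of Sun's Conjecture that the paper explicitly leaves open, and your proposal candidly says as much.
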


\begin{remark}In Table~4 below, we give the conjectured minimal values that can be taken for $N_d$ in Conjecture~4.
We believe that $N_d=O(\log(d)).$ 

\smallskip

\begin{table}[h]
	\begin{tabular}{|r|c|c|c|c|c|c|c|c}
		\hline 
		$d$    & 0 & 1 & [2,6]  & [7,21] & [22,89] & [90,156] & [157,1500]   \\   \hline
		$N_d$ & 2 & 5 & 6 & 8 & 11 & 12 & 14  \\
		\hline
		\hline 
		$d$ & [1501,1582]  & [1583,2274] & [2275,2534] & [2535,72928] & [72929,84593] & [84594,106335] & [106336,270343]  \\   \hline
		$N_d$ & 15 & 16 & 17 & 20 & 21 & 22 & 23   \\
		\hline
\end{tabular}
\caption{Conjectured minimal values for $N_d$ in Conjecture 4} \end{table} 
 
\end{remark}  
     
\section*{Concluding thoughts}

Conjectures about the asymptotics and congruence properties of $p(n)$ have driven research in analytic number theory and the theory of modular forms. Indeed, the partition numbers gave birth to the ``circle method'' in analytic number theory through the work of Hardy and Ramanujan, and has inspired the development of Hecke operators and the theory of modular forms congruences through the work of Atkin and many others. It is our hope that the conjectures presented here will inspire further deep advances in the field.

 \section*{Acknowledgements} 
 \noindent
 We note that all of the calculations required for this paper were performed on a computer running Wolfram Mathematica 12.0.

\end{document}